\newtheorem{definition}{Definition}
\newtheorem{Proposition}{Proposition}
\newtheorem{Lemma}{Lemma}
\newtheorem{Theorem}{Theorem}
\begin{document}
	\title{Skew von Neumann constant in Weak Orlicz Spaces and Weak Lebesgue Spaces}
	\author[1]{Haoyu Zhou}
	\author[1,2]{Qi Liu\thanks{Qi Liu:liuq67@aqnu.edu.cn }}
	\author[1]{Yuxin Wang}
	\author[1]{Man Liang}
	\author[3]{Linlin Fu}
	\affil{School of Mathematics and physics, Anqing Normal University, Anqing 246133,P.R.China}
	\affil[2]{International Joint Research Center of Simulation and Control for Population Ecology of Yantze River in Anhui ,Anqing Normal University, Anqing 246133,P.R.China}
	\affil[3]{ School of Mathematics and Statistics, Shaoguan University, Shaoguan 512005, PR China}
	\maketitle 
	\begin{abstract}
	This paper defines the skew von Neumann constant in quasi-Banach spaces. Meanwhile, we obtain two constants. It presents the upper and lower bounds of two constants. Subsequently, it deduces the lower bound of the skew von Neumann constant within weak Orlicz spaces. Then, leveraging the relationship between weak Orlicz spaces and weak Lebesgue spaces, the lower bound of the skew von Neumann - Jordan constant in weak Lebesgue spaces is established. Meanwhile, in this paper, we also generalize the skew von Neumann constant. Then we present the lower bounds for the $p$-th von Neumann constant in weak Orlicz spaces and weak Lebesgue spaces.
	\end{abstract}
	\textbf{keywords:} {weak Orlicz spaces; weak Lebesgue spaces; quasi-Banach spaces; skew von Neumann constant}\\\textbf{Mathematics Subject Classification: }46B20
	
	\section{Introduction} 
Let \( X \) be a Banach space. The von Neumann-Jordan constant associated with \( X \) (see \cite{2},\cite{3}) is defined as follows:

$$
C_{N J}(X):=\sup \left\{\frac{\|f-g\|_X^2+\|f+g\|_X^2}{2\left(\|f\|_X^2+\|g\|_X^2\right)}: f, g \in X \backslash\{0\}\right\} .
$$
This constant has long been favored by many scholars due to its favorable background significance and algebraic structure. For more results on the von Neumann-Jordan constant and the characterization of the geometric properties of Banach spaces via this constant, one can refer to reference \cite{18}-\cite{26}.

 Within the context of this study\cite{2} and \cite{3}. In particular, \( C_{N J}\left(L^2\left(\mathbb{R}^n\right)\right) = 1 \). For a general \( p \in [1, \infty] \), it is known that for finite \( p \), \( C_{N J}\left(L^p\left(\mathbb{R}^n\right)\right) = \max\left\{2^{\frac{2}{p}-1}, 2^{1-\frac{2}{p}}\right\} \), and \( C_{N J}\left(L^{\infty}\left(\mathbb{R}^n\right)\right) = 2 \).

The investigation into the von Neumann-Jordan constant of Lebesgue spaces can be generalized to Orlicz spaces. Let us recall the definition of these spaces (see \cite{4}). Let \( \Phi: [0, \infty) \rightarrow [0, \infty) \) be any \( N \)-function, that is, \( \Phi \) is convex, \( \Phi(0) = 0 \), \( \displaystyle\lim_{t \rightarrow 0} \frac{\Phi(t)}{t} = 0 \), and \( \displaystyle\lim_{t \rightarrow \infty} \frac{\phi(t)}{t} = \infty \). The Orlicz space \( L^{\Phi}\left(\mathbb{R}^n\right) \) is defined as the set of all measurable functions \( f \) on \( \mathbb{R}^n \) for which

$$
\|f\|_{L^{\Phi}}:=\inf \left\{\lambda>0: \int_{R^n} \Phi\left(\frac{|f(x)|}{\lambda}\right) d x \leq 1\right\}<\infty
$$

When \( \Phi(t) = t^p \) with \( 1 \leq p < \infty \), we have \( L^{\Phi}\left(\mathbb{R}^n\right) = L^p\left(\mathbb{R}^n\right) \). Some results concerning the von Neumann-Jordan constant of Orlicz spaces can be found in \cite{5}. One of the results in this book is

$$
C_{N J}\left(L^{\Phi}\left(\mathbb{R}^n\right)\right) \geq \max \left\{\frac{1}{\bar{\alpha}_{\Phi}}, 2 \bar{\beta}_{\Phi}^2\right\}
$$

where \( \bar{\alpha}_{\Phi} \) and \( \bar{\beta}_{\Phi} \) are defined by \( \bar{\alpha}_{\Phi}:=\displaystyle\inf _{t>0} \frac{\Phi^{-1}(t)}{\Phi^{-1}(2 t)} \) and \( \bar{\beta}_{\Phi}=\displaystyle\sup _{t>0} \frac{\Phi^{-1}(t)}{\Phi^{-1}(2 t)} \). 

As a scholarly contribution \cite{9}, this paper aims to examine the von Neumann-Jordan constant for weak Orlicz spaces and weak Lebesgue spaces. It should be recalled that the weak Orlicz space \( w L^{\Phi}\left(\mathbb{R}^n\right) \) is defined as the set of measurable functions \( f \) on \( \mathbb{R}^n \) such that

$$
\|f\|_{w L^{\Phi}\left(\mathbb{R}^n\right)}:=\inf \left\{b>0: \sup _{t>0} \Phi(t)\left|\left\{x \in \mathbb{R}^n: \frac{|f(x)|}{b}>t\right\}\right| \leq 1\right\}<\infty
.$$

It is worth noting that \( L^{\Phi}\left(\mathbb{R}^n\right) \subseteq w L^{\Phi}\left(\mathbb{R}^n\right) \) and \( \|f\|_{w L^{\Phi}} = \displaystyle\sup _{t>0}\left\|t \chi_{\{|f|>t\}}\right\|_{L^{\Phi}} \). Meanwhile, the weak Lebesgue space \( w L^p\left(\mathbb{R}^n\right) \) (for \( 1 <p < \infty \)) is defined as the collection of measurable functions \( f \) on \( \mathbb{R}^n \) for which

$$
\|f\|_{w L^p}=\sup _{\gamma>0} \gamma\left(\left\{x \in \mathbb{R}^n:|f(x)|>\gamma\right\}\right)^{\frac{1}{p}}<\infty .
$$

If \( \Phi(t) := t^p \) for some \( 1 \leq p < \infty \), then \( w L_{\Phi}\left(\mathbb{R}^n\right) = w L^p\left(\mathbb{R}^n\right) \). Thus, \( L_{\Phi}\left(\mathbb{R}^n\right) \) can be regarded as a generalization of the weak Lebesgue space \( L^p\left(\mathbb{R}^n\right) \). Note that

$$
\|f\|_{w L_{\Phi}\left(\mathbb{R}^n\right)}:=\sup _{t>0}\left\|t \chi_{\{|f|>t\}}\right\|_{L_{\Phi}\left(\mathbb{R}^n\right)}.
$$
 Within the context of this study \cite{27}, a quasi-normed space $X$ belongs to the class of locally bounded topological vector spaces. Equivalently, the topology on $X$ arises from a quasi-norm — a mapping \(\|\cdot\|: X \to [0, \infty)\) satisfying the following conditions:

(i) \(\|x\| = 0\) holds if and only if \(x = 0\);

(ii) For any scalar \(\alpha \in \mathbb{R}\) and vector \(x \in X\), \(\|\alpha x\| = |\alpha| \|x\|\);

(iii) There exists a constant \(C \geq 1\) such that for all \(x, y \in X\), the inequality$$\|x + y\| \leq C \left( \|x\| + \|y\| \right) $$
holds.

When \(C = 1\), the quasi-norm reduces to a standard norm. A quasi-norm \(\|\cdot\|\) is termed a $p$-norm (with \(0 < p < 1\)) if it satisfies p-subadditivity, meaning:$$\|x + y\|^p \leq \|x\|^p + \|y\|^p \quad \text{for all } x, y \in X.$$

For more results on quasi-Banach spaces, please refer to the references \cite{29}-\cite{33}.

 H. Gunawan and Ifronika, in their literature \cite{9}, for a quasi - Banach space $X$, provide the definition of the von Neumann - Jordan constant $C_{NJ}(X)$ as
 $$C_{NJ}(X):=\sup\left\{\frac{||f+g||_X^2+||f-g||_X^2}{2C_X^2(||f||_X^2+||g||_X^2)}:f,g\in X,\text{ not both zero}\right\}$$
 where \(C_X = \sup\left\{ \frac{\|f + g\|_X}{\|f\|_X + \|g\|_X} : f,g \in X,(f,g) \neq (0,0) \right\}\). They also extend the research to Orlicz spaces, give lower-bound estimates of the von Neumann-Jordan constants for weak Orlicz spaces and weak Lebesgue spaces, and prove that the von Neumann-Jordan constant for the weak Lebesgue space \(wL^p\) tends to 2 as \(p \to \infty\). (The proof relies on the refined improvement of the positive constant in the triangle inequality of the weak Lebesgue space \( wL^p \)), thus supplementing new achievements to the research on the geometric properties of relevant spaces.
 
 For more knowledge about weak Orlicz spaces and weak Lebesgue spaces, please refer to the references \cite{10}-\cite{17}.

In a novel research endeavor, Liu et al \cite{6} defined a skewed version of the von Neumann constant. For $\xi,\eta>0$, this constant is denoted as $L_{YJ}(\xi,\eta,X)$ and is defined in the following manner:
	$$L_{YJ}(\xi,\eta,X)=\sup\left\{\frac{\|\xi x+\eta y\|^2+\|\eta x-\xi y\|^2}{(\xi^2+\eta^2)(\|x\|^2+\|y\|^2)}:x,y\in X, (x,y)\neq(0,0)\right\}.$$
Besides, the authors of \cite{7} present a comparable constant:
$$L^{\prime}_{YJ}(\xi,\eta,X)=\sup\left\{\frac{\|\xi x+\eta y\|^2+\|\eta x-\xi y\|^2}{2(\|x\|^2+\|y\|^2)}:x,y\in S_X\right\}.$$
In  recent years, several scholars have initiated investigations into the $L_{YJ}(\xi,\eta,X)$ constant. Furthermore, in the same year, the authors of \cite{8} derived the precise values of the $L_{YJ}(\xi,\eta,X)$ constant and the $L^{\prime}_{YJ}(\xi,\eta,X)$ constant for the regular octagon space. The authors of Reference \cite{28} further investigated this constant and presented an inequality relationship between the constant \( L_{YJ}(\lambda, \mu, X) \) and the generalized James constant.
Based on these two constants, some new constants have also been defined, and specific details can be found in references \cite{34}-\cite{36}.

	\section{Main results}
	Since the constant has parameters, different from reference \cite{9}, we need to introduce $C_{X_1}$ and $C_{X_2}$, which are related to the skew parameters in the definition of the quasi-norm. $C_{X_1}$ and $C_{X_2}$ are defined as:
	$$C_{X_1}=\sup\left\{\frac{\|\lambda f+\mu g\|_X}{\lambda\|f\|_X+\mu\|g\|_X}:f,g\in X, (f,g)\neq(0,0)\right\}$$
	and
	$$C_{X_2}=\sup\left\{\frac{\|\mu f-\lambda g\|_X}{\mu\|f\|_X+\lambda\|g\|_X}:f,g\in X, (f,g)\neq(0,0)\right\}.$$

	\begin{definition}
		For a quasi-Banach space $X$, and for $\lambda,\mu>0$, the skew von Neumann-Jordan constant $L_{YJ}^C(\lambda,\mu,X)$ is defined by 
		$$L_{YJ}^{C}(\lambda,\mu,X)=\sup\left\{\frac{\|\lambda f+\mu g\|_X^{2}+\|\mu f-\lambda g\|_X^{2}}{C_{X_1}C_{X_2}(\lambda^2+\mu^2)(\|f\|_X^{2}+\|g\|_X^{2})}:f,g\in X, (f,g)\neq(0,0)\right\}.$$
		
	\end{definition}

	After in-depth analysis of the norm structure of the weak Lebesgue space $wL^{\Phi}(\mathbb{R}^n)$, based on the exisiting  research foundation, we present the following key Lemmas, which is an important cornerstone for the subsequent derivation of the main theorem.
\begin{Lemma}\label{t1}
	\cite{1}Let $E$ be a measurable set of $\mathbb{R}^n$ and $0<|E|<\infty$. Then 
	$$\|\chi_E\|_{wL^{\Phi}(\mathbb{R}^n)}=\frac{1}{\Phi^{-1}\left(\frac{1}{|E|}\right)}.$$
\end{Lemma}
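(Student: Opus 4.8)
The plan is to evaluate the defining infimum of the weak Orlicz norm directly for the indicator function $f=\chi_E$, collapsing the supremum over $t>0$ to a single value of $\Phi$ by exploiting the monotonicity of $\Phi$, and then to read off the optimal scaling factor $b$.

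First I would fix $b>0$ and analyze the distribution set appearing in the definition. For $f=\chi_E$ the inequality $\frac{|\chi_E(x)|}{b}>t$ holds precisely when $x\in E$ and $bt<1$; for $x\notin E$ it never holds, and once $bt\ge 1$ the set is empty. Hence
$$\left|\left\{x\in\mathbb{R}^n:\frac{\chi_E(x)}{b}>t\right\}\right|=\begin{cases}|E|,& 0<t<\tfrac1b,\\ 0,& t\ge \tfrac1b.\end{cases}$$

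Next I would compute the supremum over $t$. Since $\Phi$ is convex with $\Phi(0)=0$, it is nondecreasing and continuous on $[0,\infty)$, so $\sup_{0<t<1/b}\Phi(t)=\Phi(1/b)$, which gives
$$\sup_{t>0}\Phi(t)\left|\left\{x:\tfrac{\chi_E(x)}{b}>t\right\}\right|=\Phi\!\left(\tfrac1b\right)|E|.$$
The admissibility condition $\sup_{t>0}\Phi(t)|\{\cdots\}|\le 1$ then becomes $\Phi(1/b)\le 1/|E|$. Using that $\Phi$ is strictly increasing with increasing inverse $\Phi^{-1}$, this is equivalent to $1/b\le \Phi^{-1}(1/|E|)$, i.e. $b\ge 1/\Phi^{-1}(1/|E|)$. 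Taking the infimum over all admissible $b$ yields the claimed value $\|\chi_E\|_{wL^\Phi}=1/\Phi^{-1}(1/|E|)$.

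The only delicate point — the main obstacle — is the treatment of the endpoint $t=1/b$ in the supremum: the level set collapses to the empty set exactly at $t=1/b$, so the supremum over $\{t:bt<1\}$ is attained only as a limit from below, and one must invoke the continuity of $\Phi$ (guaranteed by convexity together with $\Phi(0)=0$) to identify $\sup_{0<t<1/b}\Phi(t)$ with $\Phi(1/b)$ rather than a strictly smaller value. A secondary point is justifying the equivalence between $\Phi(1/b)\le 1/|E|$ and $1/b\le \Phi^{-1}(1/|E|)$, which rests on the invertibility and monotonicity of $\Phi$ implicit in the $N$-function hypotheses; here the assumption $0<|E|<\infty$ is what guarantees that $1/|E|$ lies in the range of $\Phi$ so that $\Phi^{-1}(1/|E|)$ is well defined and positive.
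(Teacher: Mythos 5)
Your proof is correct, but note that the paper itself offers no proof of this lemma to compare against: it is stated with a citation to \cite{1} (Masta, Gunawan and Setya-Budhi) and used as a black box. Your direct computation from the Luxemburg-type definition is the natural self-contained argument: the distribution function of $\chi_E/b$ is the step function equal to $|E|$ for $0<t<1/b$ and $0$ for $t\ge 1/b$, the supremum collapses to $\Phi(1/b)|E|$ by monotonicity plus continuity of $\Phi$ (and you correctly flag the endpoint issue --- the supremum over the open interval $(0,1/b)$ is a limit from below, identified with $\Phi(1/b)$ by continuity, which convexity with $\Phi(0)=0$ does supply, including at the origin since $\Phi(t)\le (t/T)\Phi(T)$ forces $\Phi(t)\to 0$), and inverting $\Phi(1/b)\le 1/|E|$ gives the admissible set $\{b\ge 1/\Phi^{-1}(1/|E|)\}$, whose infimum is attained. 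An equally short alternative, using a fact the paper already displays in its introduction, is the identity $\|f\|_{wL^{\Phi}}=\sup_{t>0}\|t\chi_{\{|f|>t\}}\|_{L^{\Phi}}$: for $f=\chi_E$ the level set is $E$ for $0\le t<1$ and empty for $t\ge 1$, so the supremum equals $\|\chi_E\|_{L^{\Phi}}=1/\Phi^{-1}(1/|E|)$, reducing the lemma to the classical Orlicz-norm computation for indicators. One small caveat: under only the axioms the paper states for an $N$-function, $\Phi$ need not be strictly increasing near zero (e.g.\ $\Phi(t)=\max\{0,t-1\}^2$ satisfies all four conditions yet vanishes on $[0,1]$), so strictly speaking $\Phi^{-1}$ should be read as the generalized right inverse $\Phi^{-1}(s)=\sup\{u:\Phi(u)\le s\}$; since $1/|E|>0$ and $\Phi$ is continuous and nondecreasing, your equivalence $\Phi(1/b)\le 1/|E|\iff b\ge 1/\Phi^{-1}(1/|E|)$ survives unchanged, so this is a matter of phrasing rather than a gap.
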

\begin{Lemma}\label{t7}
	Let $1< p<\infty$ and define
	$$C_{p_1}=\sup_{(f,g)\neq(0,0)}\frac{\|\lambda f+\mu g\|_{wL^p}}{\lambda\|f\|_{wL^p}+\mu\|g\|_{wL^p}} $$and
	$$C_{p_2}=\sup_{(f,g)\neq(0,0)}\frac{\|\mu f-\lambda g\|_{wL^p}}{\mu\|f\|_{wL^p}+\lambda\|g\|_{wL^p}}.$$ 
	Then$$(\lambda+\mu)^{\frac{1}{p}}\leq C_{p_1}\leq2 $$ and $$\frac{|\mu-\lambda|^{\frac{1}{p}+1}}{\lambda+\mu}\leq C_{p_2}\leq2 .$$
\end{Lemma}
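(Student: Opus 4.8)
The plan is to prove the two pairs of bounds by different means: the upper bounds $C_{p_1},C_{p_2}\le 2$ come from the quasi-triangle inequality in $wL^p$, while the lower bounds come from testing the defining suprema on explicit functions via Lemma \ref{t1}.

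For the upper bounds I would first establish the quasi-triangle inequality $\|F+G\|_{wL^p}\le 2(\|F\|_{wL^p}+\|G\|_{wL^p})$. This follows from the set inclusion $\{|F+G|>t\}\subseteq\{|F|>t/2\}\cup\{|G|>t/2\}$: multiplying the measures by $t^p$, rewriting $t^p=2^p(t/2)^p$, and taking the supremum over $t>0$ gives $\|F+G\|_{wL^p}^p\le 2^p(\|F\|_{wL^p}^p+\|G\|_{wL^p}^p)$, after which $(a^p+b^p)^{1/p}\le a+b$ (valid since $p>1$) yields the claim. Taking $F=\lambda f,\ G=\mu g$ and using $\|\lambda f\|_{wL^p}=\lambda\|f\|_{wL^p}$ produces $\|\lambda f+\mu g\|_{wL^p}\le 2(\lambda\|f\|_{wL^p}+\mu\|g\|_{wL^p})$, hence $C_{p_1}\le 2$, and $F=\mu f,\ G=-\lambda g$ gives $C_{p_2}\le 2$. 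A sharper split at level $\theta t$ with $\theta=\|F\|_{wL^p}/(\|F\|_{wL^p}+\|G\|_{wL^p})$ even yields the constant $2^{1/p}$, but $2$ is all that is needed here.

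For the lower bounds I would use Lemma \ref{t1}, which for $\Phi(t)=t^p$ reads $\|\chi_E\|_{wL^p}=|E|^{1/p}$, and test on characteristic functions. Taking $f=g=\chi_E$ is already decisive for $C_{p_2}$: since $\mu\chi_E-\lambda\chi_E=(\mu-\lambda)\chi_E$, Lemma \ref{t1} gives the ratio $\frac{|\mu-\lambda|\,|E|^{1/p}}{(\mu+\lambda)|E|^{1/p}}=\frac{|\mu-\lambda|}{\mu+\lambda}$, and then $|\mu-\lambda|^{1/p}\le 1$ (in the admissible parameter range) upgrades this to $C_{p_2}\ge\frac{|\mu-\lambda|}{\mu+\lambda}\ge\frac{|\mu-\lambda|^{1+1/p}}{\mu+\lambda}$. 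The same pair $f=g=\chi_E$ gives ratio $1$ for $C_{p_1}$, which already establishes $C_{p_1}\ge(\lambda+\mu)^{1/p}$ whenever $\lambda+\mu\le 1$.

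The hard part will be the lower bound for $C_{p_1}$ in the regime where $(\lambda+\mu)^{1/p}>1$, since then no single-set or two-valued test function suffices: a short distribution-function computation shows that for disjoint or nested characteristic functions the ratio $\frac{\|\lambda\chi_E+\mu\chi_F\|_{wL^p}}{\lambda\|\chi_E\|_{wL^p}+\mu\|\chi_F\|_{wL^p}}$ never exceeds $1$. To reach an exponent larger than $1$ one must exploit the genuine non-normability of $wL^p$ with a multi-scale test function — for instance a power-type function $|x|^{-n/p}$ together with a suitably aligned companion, or a geometrically scaled nested family $\{E_k\}$ — and the main obstacle is to choose this family so that the supremum defining the weak norm of $\lambda f+\mu g$ is realized at a level producing exactly the factor $(\lambda+\mu)^{1/p}$ through Lemma \ref{t1}, while the denominator stays equal to $\lambda\|f\|_{wL^p}+\mu\|g\|_{wL^p}$. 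Throughout, the passage from the computed ratio to the stated bound relies on the elementary inequalities $(a+b)^{1/p}\le a^{1/p}+b^{1/p}$ and $r^{1+1/p}\le r$ for $r\le 1$, which is where the admissible range of $\lambda$ and $\mu$ enters.
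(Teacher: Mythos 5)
Your upper-bound argument is fine, and it is the same as the paper's: split the superlevel set at height $t/2$, multiply by $t^p$, take $p$-th roots, and use $(a^p+b^p)^{1/p}\le a+b$. But the substance of the lemma is the two lower bounds, and there your proposal has a genuine gap. For $C_{p_1}$ you prove $C_{p_1}\ge(\lambda+\mu)^{1/p}$ only in the trivial regime $\lambda+\mu\le 1$ (where the bound follows from the ratio $1$ obtained with $g=0$ or $f=g=\chi_E$), and you explicitly leave the main case $(\lambda+\mu)^{1/p}>1$ as an unresolved ``hard part,'' gesturing at a power-type or multi-scale family without constructing it. That deferred step is precisely what the paper does, and it is the entire content of the lower bound: it takes $f_1(x)=x^{-1/p}\chi_{(0,1)}(x)$ and $g_1(x)=(1-x)^{-1/p}\chi_{(0,1)}(x)$, which are unit vectors in $wL^p$, sets $h_1=\lambda f_1+\mu g_1$, and bounds the weak norm from below by evaluating $t\,|\{|h_1|>t\}|^{1/p}$ at the levels $t=h_1(a)$, $a\in(0,\tfrac12]$, obtaining $(\lambda+\mu)^{1/p}\bigl(\lambda+\mu(\tfrac{a}{1-a})^{1/p}\bigr)$ and, letting $a\to\tfrac12$, the value $\|\lambda f_1+\mu g_1\|_{wL^p}=(\lambda+\mu)^{1+1/p}$, whence $C_{p_1}\ge(\lambda+\mu)^{1/p}$. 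Your correct observation that two-valued test functions never give a ratio above $1$ explains why such a pair of power spikes anchored at opposite endpoints is needed, but identifying the pair and computing its distribution function is exactly what your sketch stops short of.

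The $C_{p_2}$ bound in your proposal is likewise weaker than what is claimed: testing on $f=g=\chi_E$ gives only $C_{p_2}\ge|\mu-\lambda|/(\mu+\lambda)$, and your upgrade via $r^{1+1/p}\le r$ requires $|\mu-\lambda|\le 1$, a restriction that appears nowhere in the statement, which allows all $\lambda,\mu>0$; for $|\mu-\lambda|>1$ your argument proves strictly less than the asserted bound. The paper needs no case split: it runs the same level-set computation on $h_2=\mu f_2-\lambda g_2$ with the same power pair and obtains $\|h_2\|_{wL^p}=|\mu-\lambda|^{1+1/p}$ directly. I will add one caveat in your favor: your unease about the regime $(\lambda+\mu)^{1/p}>1$ is well founded, since for large $\lambda+\mu$ the asserted lower bound would exceed the upper bound $2$, and the paper's key identity $|\{|h_1|>h_1(a)\}|=(\lambda+\mu)a$ is stated without justification and is exact only in the symmetric normalized case (for $\lambda=\mu=1$ the superlevel set is $(0,a)\cup(1-a,1)$ of measure $2a$; for $\lambda\neq\mu$ the set is $(0,a)\cup(b,1)$ with $b$ determined by $h_1(b)=h_1(a)$, and its measure is not $(\lambda+\mu)a$ in general). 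So you have correctly sensed a real difficulty that the paper glosses over; nevertheless, as a proof of the stated lemma, your proposal is missing the decisive test-function computation for both lower bounds.
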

\begin{proof}
	It is observed that, for every $t>0$, we have
	$$\begin{aligned}|\{x\in\mathbb{R}^{n}:|\lambda f(x)+\mu g(x)|>t\}|&\leq|\{x\in\mathbb{R}^{n}:|\lambda f(x)|>\frac{t}{2}\}|+|\{x\in\mathbb{R}^{n}:|\mu g(x)|>\frac{t}{2}\}|\\&=\left(\frac{t}{2}\right)^{-p}\|\lambda f\|_{wL^{p}}^{p}+\left(\frac{t}{2}\right)^{-p}\|\mu g\|_{wL^{p}}^{p}\\&=t^{-p}2^{p}(\|\lambda f\|_{wL^{p}}^{p}+\|\mu g\|_{wL^{p}}^{p}).\end{aligned}$$
	After multiplying by $t^p$ and computing the $p$-th root, we get
	$$\begin{aligned}t|\{x\in\mathbb{R}^{n}:|\lambda f(x)+\mu g(x)|>t\}|^{1/p}&\leq2(\|\lambda f\|_{wL^{p}}^{p}+\|\mu g\|_{wL^{p}}^{p})^{1/p}\\&\leq2(\lambda\|f\|_{wL^{p}}+\mu\|g\|_{wL^{p}}).\end{aligned}$$
	By taking the supremum for all $t>0$, we obtain
	$$\|\lambda f+\mu g\|_{wL^p}\leq2(\lambda\|f\|_{wL^p}+\mu\|g\|_{wL^p}).$$
	Let $f_1(x)=x^{-\frac{1}{P}}\chi_{(0,1)}$ and $g_2(x)=(1-x)^{-\frac{1}{p}}\chi_{(0,1)}$. Then
	$$\|f_1\|_{wL_{p}}=\|g_1\|_{wL^{p}}=1.$$
	Let $h_1(x)=\lambda f_1(x)+\mu g_1(x)$. As a result, for every $a\in(0,\frac{1}{2}]$, it follows that
	$$\begin{aligned}h_1(a)|\{x:|h_1(x)|>h_1(a)\}|^{\frac{1}{p}}&=\left(\lambda a^{-\frac{1}{p}}+\mu (1-a)^{-\frac{1}{p}}\right)((\lambda+\mu)a)^{\frac{1}{p}}\\&=(\lambda+\mu)^{\frac{1}{p}}\left(\lambda+\mu\left(\frac{a}{1-a}\right)^{\frac{1}{p}}\right).\end{aligned}$$
	By taking the supremum for $a\in(0,\frac{1}{2}]$, we obtain $$\|\lambda f_1+\mu g_1\|_{wL^{p}}=(\lambda+\mu)^{\frac{1}{p}}\cdot(\lambda+\mu)=(\lambda+\mu)^{1+\frac{1}{p}}.$$ This suggests 
	$$C_{p_1}\geq\frac{\|\lambda f_1+\mu g_1\|_{wL^p}}{\lambda\|f_1\|_{wL^p}+\mu\|g_1\|_{wL^p}}=\frac{(\lambda+\mu)^{1+\frac{1}{p}}}{\lambda+\mu}=(\lambda+\mu)^{\frac{1}{p}}.$$
	Therefore, we have completed the proof of the first half. Next, we will present the proof of the second half.
	
	It is observed that, for every $t>0$, we have
	$$\begin{aligned}|\{x\in\mathbb{R}^{n}:|\mu f(x)-\lambda g(x)|>t\}|&\leq|\{x\in\mathbb{R}^{n}:|\mu f(x)|>\frac{t}{2}\}|+|\{x\in\mathbb{R}^{n}:|\lambda g(x)|>\frac{t}{2}\}|\\&=\left(\frac{t}{2}\right)^{-p}\|\mu f\|_{wL^{p}}^{p}+\left(\frac{t}{2}\right)^{-p}\|\lambda g\|_{wL^{p}}^{p}\\&=t^{-p}2^{p}(\|\mu f\|_{wL^{p}}^{p}+\|\lambda g\|_{wL^{p}}^{p}).\end{aligned}$$
	After multiplying by $t^p$ and computing the $p$-th root, we get
	$$\begin{aligned}t|\{x\in\mathbb{R}^{n}:|\mu f(x)-\lambda g(x)|>t\}|^{1/p}&\leq2(\|\mu f\|_{wL^{p}}^{p}+\|\lambda g\|_{wL^{p}}^{p})^{1/p}\\&\leq2(\mu\|f\|_{wL^{p}}+\lambda\|g\|_{wL^{p}}).\end{aligned}$$
	By taking the supremum for all $t>0$, we obtain
	$$\|\mu f+\lambda g\|_{wL^p}\leq2(\mu\|f\|_{wL^p}-\lambda\|g\|_{wL^p}).$$
	Let $f_2(x)=x^{-\frac{1}{P}}\chi_{(0,1)}$ and $g_2(x)=(1-x)^{-\frac{1}{p}}\chi_{(0,1)}$. Then
	$$\|f_2\|_{wL_{p}}=\|g_2\|_{wL^{p}}=1.$$
	Let $h_2(x)=\mu f_2(x)-\lambda g_2(x)$. As a result, for every $a\in(0,\frac{1}{2}]$, it follows that
	$$\begin{aligned}h_2(a)|\{x:|h_2(x)|>h_2(a)\}|^{\frac{1}{p}}&=\left|\mu a^{-\frac{1}{p}}-\lambda (1-a)^{-\frac{1}{p}}\right|(|\mu-\lambda|a)^{\frac{1}{p}}\\&=|\mu-\lambda|^{\frac{1}{p}}\left|\mu-\lambda\left(\frac{a}{1-a}\right)^{\frac{1}{p}}\right|.\end{aligned}$$
	By taking the supremum for $a\in(0,\frac{1}{2}]$, we obtain $$\|\mu f_2-\lambda g_2\|_{wL^{p}}=|\mu-\lambda|^{\frac{1}{p}}\cdot|\mu-\lambda|=|\mu-\lambda|^{1+\frac{1}{p}}.$$ This suggests 
	$$C_{p_2}\geq\frac{\|\lambda f_2-\mu g_2\|_{wL^p}}{\lambda\|f_2\|_{wL^p}+\mu\|g_2\|_{wL^p}}=\frac{|\mu-\lambda|^{1+\frac{1}{p}}}{\lambda+\mu}.$$
	This proof is completed.

\end{proof}

According to reference \cite{9}, we can present the following Lemma \ref{t5} and Lemma \ref{t6}.

\begin{Lemma}\label{t5}
	Let $1<p<\infty$ and define
	$\|f\|_{wL^p}^*:=\displaystyle\sup_{|E|\subset\mathbb{R}^n}|E|^{\frac{1}{P}-1}\left(\int_{R^n}|f(x)|dx\right).$
	Then we have $\|f\|_{wL^p}^*\leq\frac{p}{p-1}\|f\|_{wL^p}.$
\end{Lemma}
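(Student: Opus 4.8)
The plan is to read the quantity $\int_E|f|$ through its distribution function and to exploit the weak-type bound $|\{|f|>t\}|\le t^{-p}\|f\|_{wL^p}^p$ that is built into the definition of the weak Lebesgue norm. (Here I take the natural reading of the functional, in which the integral is over the test set $E$, i.e. $\|f\|_{wL^p}^*=\sup_{0<|E|<\infty}|E|^{1/p-1}\int_E|f|$.) Write $M:=\|f\|_{wL^p}$ and fix a measurable set $E$ with $0<|E|<\infty$. The first step is the layer-cake identity
$$\int_E|f(x)|\,dx=\int_0^\infty\bigl|\{x\in E:|f(x)|>t\}\bigr|\,dt,$$
followed by the elementary pointwise estimate $|\{x\in E:|f(x)|>t\}|\le\min\{|E|,\,|\{x\in\mathbb{R}^n:|f(x)|>t\}|\}$, which holds because the level set inside $E$ is contained both in $E$ and in the full level set. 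Combining this with the weak-type bound gives
$$\int_E|f(x)|\,dx\le\int_0^\infty\min\{|E|,\,M^pt^{-p}\}\,dt.$$

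The second step is to evaluate the right-hand integral by splitting the range of $t$ at the crossover level where the two expressions inside the minimum agree. Setting $|E|=M^pt^{-p}$ gives $t_0=M|E|^{-1/p}$; for $t\le t_0$ the minimum equals $|E|$, while for $t>t_0$ it equals $M^pt^{-p}$. This is exactly where the hypothesis $1<p<\infty$ enters: it is what makes the tail $\int_{t_0}^\infty t^{-p}\,dt$ converge and what produces the factor $1/(p-1)$. The two elementary integrals give $|E|\cdot t_0=M|E|^{1-1/p}$ for the first piece and $\tfrac{M}{p-1}|E|^{1-1/p}$ for the second, so that
$$\int_E|f(x)|\,dx\le M|E|^{1-1/p}\Bigl(1+\tfrac{1}{p-1}\Bigr)=\frac{p}{p-1}\,M\,|E|^{1-1/p}.$$

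The final step is cosmetic: multiply through by $|E|^{1/p-1}$ to obtain $|E|^{1/p-1}\int_E|f|\le\frac{p}{p-1}\|f\|_{wL^p}$, and then take the supremum over all admissible $E$ to conclude $\|f\|_{wL^p}^*\le\frac{p}{p-1}\|f\|_{wL^p}$. I do not expect a genuine obstacle here; the only point demanding care is the choice of the optimal splitting level $t_0$, since a crude split still yields a finite bound but not the sharp constant $p/(p-1)$, together with the verification that the tail integral converges, which is precisely the role of the restriction $1<p<\infty$. The monotone passage justifying the layer-cake identity is standard and needs no special treatment.
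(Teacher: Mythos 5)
Your argument is correct, and it is the standard one. A point of comparison worth knowing: the paper itself gives no proof of Lemma \ref{t5} at all --- it states the lemma (and its verbatim duplicate, Lemma \ref{t6}) with only a citation to reference \cite{9}, so your layer-cake computation supplies the missing argument rather than diverging from an existing one. The proof you give is exactly the classical normability estimate for $wL^p$, $p>1$: write $\int_E|f|=\int_0^\infty|\{x\in E:|f|>t\}|\,dt$, bound the integrand by $\min\{|E|,\,t^{-p}\|f\|_{wL^p}^p\}$, and split at the crossover $t_0=\|f\|_{wL^p}|E|^{-1/p}$, which yields $M|E|^{1-1/p}$ and $\frac{M}{p-1}|E|^{1-1/p}$ for the two pieces and hence the constant $\frac{p}{p-1}$; your arithmetic checks out, and the constant is in fact sharp (test $f(x)=x^{-1/p}\chi_{(0,1)}$ against $E=(0,\varepsilon)$ as $\varepsilon\to 0$). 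You were also right to silently repair the statement: as printed, the lemma integrates $|f|$ over all of $\mathbb{R}^n$ rather than over $E$, under which reading the supremum over small sets $E$ would be infinite for any $f\not\equiv 0$ (since $\frac{1}{p}-1<0$), and the notation ``$\sup_{|E|\subset\mathbb{R}^n}$'' should read as a supremum over measurable $E$ with $0<|E|<\infty$. Two trivial housekeeping remarks: you may dispose of the degenerate cases $\|f\|_{wL^p}\in\{0,\infty\}$ in a line before splitting at $t_0$, and the monotone-convergence justification of the layer-cake identity is, as you say, standard.
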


\begin{Lemma}\label{t6}
	Let $1<p<\infty$. Then we have
	$\|f\|_{wL^p}^*\leq\frac{p}{p-1}\|f\|_{wL^p}.$
\end{Lemma}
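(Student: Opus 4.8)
The plan is to reduce the claim to a pointwise-in-$E$ estimate and then pass to the supremum. Concretely, I would fix a measurable set $E$ with $0<|E|<\infty$ and show that $|E|^{\frac{1}{p}-1}\int_E|f(x)|\,dx\le\frac{p}{p-1}\|f\|_{wL^p}$, after which taking the supremum over all admissible $E$ gives the stated bound. The natural entry point is the layer-cake identity $\int_E|f(x)|\,dx=\int_0^\infty|\{x\in E:|f(x)|>t\}|\,dt$, which recasts the problem as an estimate for the truncated distribution function.

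First I would majorize the integrand in two complementary ways: the trivial bound $|\{x\in E:|f|>t\}|\le|E|$, valid for all $t$, and the weak-type bound $|\{x\in E:|f|>t\}|\le|\{|f|>t\}|\le t^{-p}\|f\|_{wL^p}^{p}$, which is exactly the defining property of the weak-$L^p$ quasi-norm. Writing $A:=\|f\|_{wL^p}$ and introducing a free cut-off $s>0$, I would use the bound $|E|$ on $(0,s)$ and the bound $A^p t^{-p}$ on $(s,\infty)$, obtaining $\int_E|f|\,dx\le s|E|+\frac{A^p}{p-1}\,s^{1-p}$; the convergence of the tail integral $\int_s^\infty t^{-p}\,dt$ is precisely where the hypothesis $p>1$ enters.

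The decisive step is the optimization over $s$. Differentiating the right-hand side and setting the derivative to zero locates the minimizer at $s=A|E|^{-1/p}$, and substituting this value makes both terms collapse into a common multiple of $A|E|^{1-1/p}$, yielding $\int_E|f|\,dx\le\frac{p}{p-1}A|E|^{1-1/p}$. Multiplying by $|E|^{\frac{1}{p}-1}$ and taking the supremum over $E$ then delivers $\|f\|_{wL^p}^{*}\le\frac{p}{p-1}\|f\|_{wL^p}$.

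I expect the only genuine subtlety to be the bookkeeping of exponents through the substitution, so that the constant emerges sharply as $\frac{p}{p-1}=1+\frac{1}{p-1}$ rather than as a larger value; the layer-cake identity and the two elementary majorants are routine. I would also note that the complementary lower bound $\|f\|_{wL^p}\le\|f\|_{wL^p}^{*}$ is immediate by testing the supremum defining $\|\cdot\|_{wL^p}^{*}$ on the level sets $E=\{|f|>t\}$, where $\int_E|f|\ge t|E|$ forces $|E|^{\frac{1}{p}-1}\int_E|f|\ge t|E|^{1/p}$; taking the supremum over $t$ recovers $\|f\|_{wL^p}$, so that the two estimates together exhibit $\|\cdot\|_{wL^p}^{*}$ as a norm equivalent to $\|\cdot\|_{wL^p}$.
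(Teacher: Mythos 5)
Your argument is correct and complete, and it is worth noting that the paper does not actually prove this statement: Lemmas \ref{t5} and \ref{t6} are stated (in duplicate) without proof and attributed to \cite{9}, where the standard argument is precisely the one you give --- the layer-cake formula, the two majorants $\min\left\{|E|,\,t^{-p}\|f\|_{wL^p}^p\right\}$ for the truncated distribution function, and optimization of the cut-off at $s=\|f\|_{wL^p}|E|^{-1/p}$, which indeed makes the two terms combine into $\frac{p}{p-1}\|f\|_{wL^p}|E|^{1-\frac{1}{p}}$, with the hypothesis $p>1$ entering exactly where you say, in the convergence of $\int_s^\infty t^{-p}\,dt$. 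One point worth flagging explicitly: you tacitly corrected the paper's definition, which misprints $\int_{\mathbb{R}^n}|f(x)|\,dx$ where $\int_E|f(x)|\,dx$ over sets with $0<|E|<\infty$ is meant; under the misprinted definition the supremum would be infinite for every nonzero integrable $f$ (let $|E|\to 0$ and use $\frac{1}{p}-1<0$), so your reading is the only one under which the lemma holds, and your fixed-$E$ estimate followed by a supremum is the right structure. Your closing observation that testing $E=\{|f|>t\}$ yields the converse bound $\|f\|_{wL^p}\le\|f\|_{wL^p}^*$ is also correct, and it is exactly this equivalence of $\|\cdot\|_{wL^p}^*$ with $\|\cdot\|_{wL^p}$ that the paper invokes later, in the Proposition bounding $C_{p_1}$ and $C_{p_2}$ by $\min\left\{2,\frac{p}{p-1}\right\}$.
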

Taking the preceding lemmas as our basis, we will now undertake the tasknof finishing the proof outlined below.
\begin{Proposition}
		Let $1< p<\infty$ and define
	$$C_{p_1}=\sup_{(f,g)\neq(0,0)}\frac{\|\lambda f+\mu g\|_{wL^p}}{\lambda\|f\|_{wL^p}+\mu\|g\|_{wL^p}} $$and
	$$C_{p_2}=\sup_{(f,g)\neq(0,0)}\frac{\|\mu f-\lambda g\|_{wL^p}}{\mu\|f\|_{wL^p}+\lambda\|g\|_{wL^p}}.$$ 
	Then$$(\lambda+\mu)^{\frac{1}{p}}\leq C_{p_1}\leq\min\left\{2,\frac{p}{p-1}\right\} $$ and $$\frac{|\mu-\lambda|^{\frac{1}{p}+1}}{\lambda+\mu}\leq C_{p_2}\leq\min\left\{2,\frac{p}{p-1}\right\}  .$$
\end{Proposition}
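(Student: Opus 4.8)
The plan is to improve only the upper bounds, since the lower bounds $(\lambda+\mu)^{1/p}\le C_{p_1}$ and $|\mu-\lambda|^{1/p+1}/(\lambda+\mu)\le C_{p_2}$ are exactly the ones already established in Lemma~\ref{t7} by testing against the explicit functions $f_1,g_1$ and $f_2,g_2$; those computations carry over verbatim. Likewise the crude bounds $C_{p_1}\le 2$ and $C_{p_2}\le 2$ are already available from Lemma~\ref{t7}. Hence it suffices to prove the single new estimate $C_{p_1},C_{p_2}\le \frac{p}{p-1}$, and then the asserted $\min$ follows immediately by combining the two upper bounds.

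The key idea is to route the estimate through the auxiliary functional $\|\cdot\|^*_{wL^p}$ of Lemma~\ref{t5}, which, unlike the genuine weak-$L^p$ quasi-norm, satisfies the triangle inequality with constant $1$. First I would record the two comparison inequalities
$$\|f\|_{wL^p}\le \|f\|^*_{wL^p}\le \frac{p}{p-1}\|f\|_{wL^p}.$$
The right-hand inequality is precisely Lemma~\ref{t5} (restated as Lemma~\ref{t6}). The left-hand inequality is obtained by testing the supremum defining $\|\cdot\|^*_{wL^p}$ on the level set $E=\{|f|>\gamma\}$, which has finite positive measure for $f\in wL^p$: since $\int_E|f|\ge \gamma|E|$, one gets $|E|^{1/p-1}\int_E|f|\ge \gamma|E|^{1/p}$, and taking the supremum over $\gamma>0$ yields $\|f\|^*_{wL^p}\ge\|f\|_{wL^p}$.

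Next I would note that $\|\cdot\|^*_{wL^p}$ is positively homogeneous and \emph{subadditive with constant one}: for any admissible set $E$ the ordinary triangle inequality for the integral gives $|E|^{1/p-1}\int_E|\lambda f+\mu g|\le \lambda\,|E|^{1/p-1}\int_E|f|+\mu\,|E|^{1/p-1}\int_E|g|$, and taking the supremum over $E$ produces $\|\lambda f+\mu g\|^*_{wL^p}\le \lambda\|f\|^*_{wL^p}+\mu\|g\|^*_{wL^p}$. Chaining the three facts yields
$$\|\lambda f+\mu g\|_{wL^p}\le \|\lambda f+\mu g\|^*_{wL^p}\le \lambda\|f\|^*_{wL^p}+\mu\|g\|^*_{wL^p}\le \frac{p}{p-1}\bigl(\lambda\|f\|_{wL^p}+\mu\|g\|_{wL^p}\bigr),$$
whence $C_{p_1}\le \frac{p}{p-1}$. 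The identical argument applied to $\mu f-\lambda g$ gives $C_{p_2}\le\frac{p}{p-1}$, and combining with the bound $2$ from Lemma~\ref{t7} closes the proof.

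The argument is a soft-analysis routing through a genuine norm, so no single step is technically deep; the one point that must be gotten right is that $\|\cdot\|^*_{wL^p}$ obeys the triangle inequality with constant one, as this is exactly what converts the quasi-norm constant $2$ into the sharper factor $p/(p-1)$. A minor item to keep straight is that the supremum in $\|\cdot\|^*_{wL^p}$ must be taken over sets with $0<|E|<\infty$, so that both comparison inequalities are meaningful; and as a consistency check, the improvement is genuine precisely when $p/(p-1)<2$, i.e. for $p>2$, while for $1<p\le 2$ the bound reverts to $2$, which is why the statement is phrased with a $\min$.
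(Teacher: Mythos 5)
Your proposal is correct and takes essentially the same route as the paper: the lower bounds and the factor $2$ are imported from Lemma~\ref{t7}, and the bound $\frac{p}{p-1}$ is obtained by chaining $\|\lambda f+\mu g\|_{wL^p}\leq\|\lambda f+\mu g\|_{wL^p}^{*}\leq\lambda\|f\|_{wL^p}^{*}+\mu\|g\|_{wL^p}^{*}\leq\frac{p}{p-1}\left(\lambda\|f\|_{wL^p}+\mu\|g\|_{wL^p}\right)$ through the normable functional of Lemmas~\ref{t5} and~\ref{t6}, exactly as the paper does via ``the normability inherent to the weak $L^p$''. Your write-up is in fact more complete than the paper's, since you explicitly verify the comparison $\|f\|_{wL^p}\leq\|f\|_{wL^p}^{*}$ on level sets and the exact subadditivity of $\|\cdot\|_{wL^p}^{*}$, both of which the paper leaves implicit.
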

\begin{proof}
	On the basis of Lemma \ref{t5} and Lemma \ref{t6}. The normability inherent to the weak $L^p$, we get
	$$\begin{aligned}\|\lambda f+\mu g\|_{wL^p}&\leq\|\lambda f+\mu g\|_{wL^{p}}^{*}\\&\leq\left(\frac{p}{p-1}\right)\lambda\|f\|_{wL^{p}}+\left(\frac{p}{p-1}\right)\mu\|g\|_{wL^{p}}\\&=\left(\frac{p}{p-1}\right)\left(\lambda\|f\|_{wL^{p}}+\mu\|g\|_{wL^{p}}\right).\end{aligned}$$
	Similarly, we can obtain $\|\mu f-\lambda g\|_{wL^p}\leq\left(\frac{p}{p-1}\right)\left(\mu\|f\|_{wL^{p}}+\lambda\|g\|_{wL^{p}}\right)$.
	Combining with Lemma \ref{t7}, we can obtain
	$$C_{p_1}=\sup_{(f,g)\neq(0,0)}\frac{\|\lambda f+\mu g\|_{wL^p}}{\lambda\|f\|_{wL^p}+\mu\|g\|_{wL^p}}\leq\min\left\{2,\frac{p}{p-1}\right\}$$
	and
	$$C_{p_2}=\sup_{(f,g)\neq(0,0)}\frac{\|\mu f-\lambda g\|_{wL^p}}{\mu\|f\|_{wL^p}+\lambda\|g\|_{wL^p}}\leq\min\left\{2,\frac{p}{p-1}\right\}.$$

\end{proof}

	\begin{Theorem}\label{t2}
		Let $\Phi$ be any N-function, then$$L_{YJ}^C(wL^{\Phi}(\mathbb{R}^{n}))\geq\max\left\{\frac{1}{C_{\Phi_1}C_{\Phi_2}\bar{\alpha}_{\Phi}^{2}},\frac{2\bar{\beta}_{\Phi}^{2}}{C_{\Phi_1}C_{\Phi_2}}\right\}.$$
	\end{Theorem}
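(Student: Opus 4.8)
The plan is to separate the normalising constants from the geometric content. Because $C_{\Phi_1}$ and $C_{\Phi_2}$ are fixed numbers, independent of the test pair $(f,g)$, they factor out of the supremum, and the definition gives
$$L_{YJ}^{C}(\lambda,\mu,wL^{\Phi})=\frac{1}{C_{\Phi_1}C_{\Phi_2}}\,\sup_{(f,g)\neq(0,0)}\frac{\|\lambda f+\mu g\|_{wL^{\Phi}}^{2}+\|\mu f-\lambda g\|_{wL^{\Phi}}^{2}}{(\lambda^2+\mu^2)\bigl(\|f\|_{wL^{\Phi}}^{2}+\|g\|_{wL^{\Phi}}^{2}\bigr)}.$$
Hence it suffices to exhibit, for each of the two quantities $\bar{\alpha}_{\Phi}^{-2}$ and $2\bar{\beta}_{\Phi}^{2}$, a single admissible pair whose \emph{unnormalised} skew ratio is at least that quantity; taking the larger of the two resulting bounds and dividing by $C_{\Phi_1}C_{\Phi_2}$ yields the theorem. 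Every norm below is evaluated through Lemma \ref{t1}, which reduces $\|\chi_E\|_{wL^{\Phi}}$ to $1/\Phi^{-1}(1/|E|)$.

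For the term $2\bar{\beta}_{\Phi}^{2}$ I would take disjoint sets $A,B\subset\mathbb{R}^n$ with $|A|=|B|=m$ and the signed pair $f=\chi_A+\chi_B$, $g=\chi_A-\chi_B$. Then $|f|=|g|=\chi_{A\cup B}$, so Lemma \ref{t1} gives $\|f\|_{wL^{\Phi}}=\|g\|_{wL^{\Phi}}=1/\Phi^{-1}(1/(2m))$, whereas $\lambda f+\mu g=(\lambda+\mu)\chi_A+(\lambda-\mu)\chi_B$ and $\mu f-\lambda g=(\mu-\lambda)\chi_A+(\mu+\lambda)\chi_B$ are two-valued step functions on $A\cup B$. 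Evaluating their weak-Orlicz norms and forming the ratio, the larger-value layer of measure $m$ should control both numerators, collapsing the ratio to a $(\lambda,\mu)$-dependent constant times $\bigl(\Phi^{-1}(1/(2m))/\Phi^{-1}(1/m)\bigr)^2$; writing $t=1/(2m)$ and passing to the supremum over $t>0$ replaces this ratio of inverse $N$-functions by $\bar{\beta}_{\Phi}^{2}$, and for balanced parameters the prefactor is exactly $2$.

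For the term $\bar{\alpha}_{\Phi}^{-2}$ I would instead use the genuinely disjoint pair $f=\chi_A$, $g=\chi_B$, again with $|A|=|B|=m$, so that $\|f\|_{wL^{\Phi}}=\|g\|_{wL^{\Phi}}=1/\Phi^{-1}(1/m)$. Now $\lambda f+\mu g$ and $\mu f-\lambda g$ place their \emph{smaller} value on the union $A\cup B$ of measure $2m$, so in the regime where that low layer controls the norm, the ratio should collapse to a $(\lambda,\mu)$-constant times the reciprocal quantity $\bigl(\Phi^{-1}(1/m)/\Phi^{-1}(1/(2m))\bigr)^2$, whose supremum over $m$ equals $\bar{\alpha}_{\Phi}^{-2}$ since $\bar{\alpha}_{\Phi}=\inf_{t>0}\Phi^{-1}(t)/\Phi^{-1}(2t)$. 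Comparing the two constructions and taking the maximum then produces the asserted bound.

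The main obstacle is the exact evaluation of the weak-Orlicz norm of the two-valued step functions $\lambda f+\mu g$ and $\mu f-\lambda g$, which does not follow directly from Lemma \ref{t1}. One must pass to the distribution function, observe that $\sup_{s>0}\Phi(s/b)\,|\{|\cdot|>s\}|$ is attained at one of the finitely many jump heights, and thereby write each norm as the maximum of two competing layer-quantities, one tied to the union $A\cup B$ and one to an individual set. Determining which layer dominates, and verifying that in the chosen measure regime and for the prescribed skew parameters $\lambda,\mu$ it is precisely the branch that isolates $\bar{\beta}_{\Phi}$ (respectively $\bar{\alpha}_{\Phi}$) rather than a spurious $(\lambda,\mu)$-dependent constant, is the delicate step; it is also where the parameters $\lambda,\mu$ and the normalising constants $C_{\Phi_1},C_{\Phi_2}$ must be carried through the estimate with care.
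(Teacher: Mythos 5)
Your constructions coincide with the paper's own (its $f_i=\Phi^{-1}(2t_0)\chi_{B(x_i,r_0)}$ are exactly your normalized $\chi_A,\chi_B$ with $|A|=|B|=1/(2t_0)$, and its $g_1,g_2$ are your $\chi_A\pm\chi_B$ pair), so the approach is the same; the problem is that you stop precisely at the step that decides the theorem, and that step, carried out honestly, does not produce the stated constants. Do the layer analysis you postpone. For $f=\chi_A+\chi_B$, $g=\chi_A-\chi_B$, the functions $\lambda f+\mu g$ and $\mu f-\lambda g$ have their large layer of height $\lambda+\mu$ on a set of measure $m$ and a small layer of height $|\lambda-\mu|$ on the union, so the best bound this pair yields is
\[
\frac{2(\lambda+\mu)^2\bigl(\Phi^{-1}(1/(2m))/\Phi^{-1}(1/m)\bigr)^2}{2(\lambda^2+\mu^2)\,C_{\Phi_1}C_{\Phi_2}}
\;\longrightarrow\;
\frac{(\lambda+\mu)^2}{\lambda^2+\mu^2}\cdot\frac{\bar{\beta}_\Phi^2}{C_{\Phi_1}C_{\Phi_2}},
\]
which equals the claimed $2\bar{\beta}_\Phi^2/(C_{\Phi_1}C_{\Phi_2})$ only when $\lambda=\mu$ --- exactly the caveat you concede with ``for balanced parameters the prefactor is exactly $2$.'' The $\bar{\alpha}$ branch is worse: since $\chi_A$ and $\chi_B$ have disjoint supports, the heights never add, so the union layer of $\lambda\chi_A+\mu\chi_B$ (and of $\mu\chi_A-\lambda\chi_B$) has height only $\min(\lambda,\mu)$, and your ``low layer controls'' regime gives at best $\frac{\min(\lambda,\mu)^2}{\lambda^2+\mu^2}\cdot\frac{1}{C_{\Phi_1}C_{\Phi_2}\bar{\alpha}_\Phi^2}$, which is at most half the claimed $\frac{1}{C_{\Phi_1}C_{\Phi_2}\bar{\alpha}_\Phi^2}$ even when $\lambda=\mu$. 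So there is a genuine quantitative gap: no choice of measure regime rescues the theorem's prefactors from these test pairs for general $\lambda,\mu$.

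For context, the paper's own proof bridges this same gap only by asserting norm identities that are false: it writes $\|\lambda\chi_{B_1}+\mu\chi_{B_2}\|_{wL^\Phi}=(\lambda+\mu)\|\chi_{B_1\cup B_2}\|_{wL^\Phi}$ for disjoint balls, although the left-hand function takes the values $\lambda$ and $\mu$ on disjoint sets and never $\lambda+\mu$; and in the $\bar{\beta}$ branch it claims $\|\lambda g_1+\mu g_2\|_{wL^\Phi}=2\lambda\Phi^{-1}(u_0)\|\chi_{B(y_1,v_0)}\|_{wL^\Phi}$, although the height on $B(y_1,v_0)$ is $(\lambda+\mu)\Phi^{-1}(u_0)$, and the companion claim with $2\mu$ cannot hold simultaneously unless $\lambda=\mu$. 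Your instinct that the weak-Orlicz norm of the two-valued step functions is ``the delicate step'' is therefore exactly right, and your proposal is more honest than the paper at that point; but as a proof it is incomplete, and completed correctly it establishes the inequality only with the weaker prefactors above --- in particular it proves the stated bound in neither branch for $\lambda\neq\mu$, and in the $\bar{\alpha}$ branch not even for $\lambda=\mu$.
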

	\begin{proof}
		By the definition of $\bar{\alpha}_\Phi$, for any $\varepsilon>0$, there exists some $t_0>0$ such that
		$$\frac{\Phi^{-1}(t_0)}{\Phi^{-1}(2t_0)}<\bar{\alpha}_\Phi+\varepsilon.$$
		First, define $r_0$ as $r_0:=(2t_0|B(0,1)|)^{-\frac{1}{n}}$. Next, select two points $x_1,x_2\in\mathbb{R}^n$ so that the balls $B(x_1,r_0)$ and $B(x_2,r_0)$ are disjoint. For each index $i=1,2$, define the function $f_i$ as $f_i:=\Phi^{-1}(2t_0)\chi_{B(x_i,r_0)}$. After that, by applying Lemma \ref{t1}, the following result holds		
		$$\begin{aligned}
	\|f_i\|_{wL^\Phi}&=\Phi^{-1}(2t_0)\|B(x_i,r_0)\|_{wL^\Phi}\\&=\frac{\Phi^{-1}(2t_0)}{\Phi^{-1}(\frac{1}{|B(x_i,r_0)|})}\\&=1.	\end{aligned}$$
		Given the disjointness of $B(x_1,r_0)$ and $B(x_2,r_0)$, it follows that
		$$\begin{aligned}\|\lambda f_{1}+\mu f_{2}\|_{wL^{\Phi}}&=\Phi^{-1}(2t_{0})\|\lambda\chi_{B(x_{1},r_{0})}+\mu\chi_{B(x_{2},r_{0})}\|_{wL^{\Phi}}\\&=(\lambda+\mu)\Phi^{-1}(2t_0)\|\chi_{B(x_1,r_0)\cup B(x_2,r_0)}\|_{wL^\Phi}\\&=\frac{(\lambda+\mu)\Phi^{-1}(2t_0)}{\Phi^{-1}(t_0)}\\&>\frac{\lambda+\mu}{\bar{\alpha}_{\Phi}+\varepsilon}.\end{aligned}$$
		Similarly,
		$$\|\mu f_1-\lambda f_2\|_{wL^{\Phi}}>\frac{|\mu-\lambda|}{\bar{\alpha}_{\Phi}+\varepsilon}.$$
		Based on  what has been obtained above, we have
		$$\begin{aligned}
	L_{YJ}^C(wL^\Phi(\mathbb{R}^n))&\geq\frac{\|\lambda f_1+\mu f_2\|_{wL^\Phi}^2+\|\mu f_1-\lambda f_2\|_{wL^\Phi}^2}{C_{\Phi_1}C_{\Phi_2}(\lambda^2+\mu^2)(\|f_1\|_{wL^\Phi}^2+\|f_2\|_{wL^\Phi}^2)}\\&\geq\frac{1}{C_{\Phi_1}C_{\Phi_2}(\bar{\alpha}_\Phi+\epsilon)^2}.\end{aligned}$$
		We can conclude that
		$$L_{YJ}^C(wL^{\Phi}(\mathbb{R}^{n}))\geq\frac{1}{C_{\Phi_1}C_{\Phi_2}\bar{\alpha}_{\Phi}^{2}}.$$
		Similarly, from the definition of $\bar{\beta}_\Phi$, for arbitrary $\epsilon>0$, there exists $u_0>0$ where
		$$\frac{\Phi^{-1}(u_0)}{\Phi^{-1}(2u_0)}>\bar{\beta}_\Phi-\frac{\varepsilon}{2}.$$
		Next, introduce $v_0$ by setting $v_0:=(2u_0|B(0,1)|)^{-\frac{1}{n}}$. Next, select two points $y_1,y_2\in\mathbb{R}^n$ so that the balls $B(y_1,r_0)$ and $B(y_2,r_0)$ are disjoint. Then, define two functions: let $g_1$ be given by $g_1:=\Phi^{-1}(\chi_{B(y_1,v_0)}+\chi_{B(y_2,v_0)})$, and $g_2$ be given by $g_2:=\Phi^{-1}(\chi_{B(y_1,v_0)}-\chi_{B(y_2,v_0)})$.
		
		By Lemma \ref{t1}, we thus observe that
		$$\begin{aligned}\|g_{1}\|_{wL^{\phi}}&=\Phi^{-1}(u_{0})\|\chi_{B(y_{1},v_{0})\cup B(y_{2},v_{0})}\|_{wL^{\Phi}}\\&=\frac{\Phi^{-1}(u_{0})}{\Phi^{-1}\left(\frac{1}{2|B(y_{1},v_{0})|}\right)}\\&=1.\end{aligned}$$
		Similarly, we have $\|g_{2}\|_{wL^{\phi}}=1$.
		
			Given the disjointness of $B(x_1,r_0)$ and $B(x_2,r_0)$, it follows that
		$$\begin{aligned}
		\|\lambda g_{1}+\mu g_{2}\|_{wL^{\Phi}}&=2\lambda\Phi^{-1}(u_{0})\|\chi_{B(y_{1},v_{0})}\|_{wL^{\Phi}}\\&=\frac{2\lambda\Phi^{-1}(u_{0})}{\Phi^{-1}(2u_{0})}\\&>\lambda(2\bar{\beta}-\varepsilon)\end{aligned}$$
		and 
		$$\begin{aligned}
	\|\mu g_{1}-\lambda g_{2}\|_{wL^{\Phi}}&=2\mu\Phi^{-1}(u_{0})\|\chi_{B(y_{1},v_{0})}\|_{wL^{\Phi}}\\&=\frac{2\mu\Phi^{-1}(u_{0})}{\Phi^{-1}(2u_{0})}\\&>\mu(2\bar{\beta}-\varepsilon).\end{aligned}$$
			Based on  what has been obtained above, we have
	$$\begin{aligned}
L_{YJ}^C(wL^{\Phi}(\mathbb{R}^{n}))&\geq\frac{\|\lambda g_{1}+\mu g_{2}\|_{wL^{\Phi}}^{2}+\|\mu g_{1}-\lambda g_{2}\|_{wL^{\Phi}}^{2}}{C_{\Phi_1}C_{\Phi_2}(\lambda^2+\mu^2)(\|g_{1}\|_{wL^{\Phi}}^{2}+|\|g_{2}\|_{wL^{\Phi}}^{2})}\\&\geq\frac{(2\bar{\beta}_{\Phi}-\varepsilon)^{2}}{2C_{\Phi_1}C_{\Phi_2}}.\end{aligned}$$
		We can conclude that
		 $$L_{YJ}^C(wL^\Phi(\mathbb{R}^n))\geq\frac{2\bar{\beta}_\Phi^2}{C_{\Phi_1}C_{\Phi_2}}.$$
	\end{proof}
	As a consequence of Theorem \ref{t2}, we can derive a lower bound for the skew von Neumann-Jordan constant of weak Lebesgue spaces.
		\begin{Theorem}\label{t4}
			If $p>1$, then
				$$L_{YJ}^{C}(wL^{\Phi}(\mathbb{R}^{n}))\geq\max\left\{\frac{2^{\frac{2}{p}}}{C_{p_1}C_{p_2}},\frac{2^{1-\frac{2}{p}}}{C_{p_1}C_{p_2}}\right\}.$$
		
		\end{Theorem}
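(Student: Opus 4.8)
The plan is to obtain this inequality as a direct specialization of Theorem \ref{t2} to the power $N$-function $\Phi(t)=t^p$, for which $wL^{\Phi}(\mathbb{R}^n)$ coincides with the weak Lebesgue space $wL^p(\mathbb{R}^n)$ and the auxiliary constants $C_{\Phi_1},C_{\Phi_2}$ reduce to $C_{p_1},C_{p_2}$. The entire argument therefore hinges on evaluating the two quantities $\bar{\alpha}_{\Phi}$ and $\bar{\beta}_{\Phi}$ that appear in the lower bound of Theorem \ref{t2} for this concrete choice of $\Phi$.

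First I would invert $\Phi$: since $\Phi(t)=t^p$ with $p>1$ is strictly increasing on $[0,\infty)$, its inverse is $\Phi^{-1}(s)=s^{1/p}$. Substituting this into the defining ratio gives, for every $t>0$,
$$\frac{\Phi^{-1}(t)}{\Phi^{-1}(2t)}=\frac{t^{1/p}}{(2t)^{1/p}}=2^{-1/p}.$$
The crucial observation is that this ratio is independent of $t$. Consequently both the infimum and the supremum over $t>0$ collapse to the same value, so that $\bar{\alpha}_{\Phi}=\bar{\beta}_{\Phi}=2^{-1/p}$.

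With these two values in hand, it remains to substitute them into the bound of Theorem \ref{t2}. For the first term one obtains
$$\frac{1}{C_{p_1}C_{p_2}\,\bar{\alpha}_{\Phi}^{2}}=\frac{1}{C_{p_1}C_{p_2}\,2^{-2/p}}=\frac{2^{2/p}}{C_{p_1}C_{p_2}},$$
while for the second term
$$\frac{2\,\bar{\beta}_{\Phi}^{2}}{C_{p_1}C_{p_2}}=\frac{2\cdot 2^{-2/p}}{C_{p_1}C_{p_2}}=\frac{2^{1-2/p}}{C_{p_1}C_{p_2}}.$$
Taking the maximum of these two expressions reproduces exactly the claimed inequality.

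Since this reduces to a clean substitution, I do not anticipate any genuine obstacle; the only point requiring care is recognizing that the constancy of the ratio $\Phi^{-1}(t)/\Phi^{-1}(2t)$ forces $\bar{\alpha}_{\Phi}$ and $\bar{\beta}_{\Phi}$ to coincide, which is precisely what lets a single lower-bound result furnish both terms of the maximum. I would also verify that the identification $C_{\Phi_1}=C_{p_1}$ and $C_{\Phi_2}=C_{p_2}$ is legitimate, namely that the supremal constants defined via the $wL^{\Phi}$-norm agree with those of Lemma \ref{t7} once $\Phi(t)=t^p$; this follows at once from the equality $wL^{\Phi}(\mathbb{R}^n)=wL^p(\mathbb{R}^n)$.
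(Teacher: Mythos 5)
Your proposal is correct and takes essentially the same route as the paper's own proof: both specialize Theorem \ref{t2} to $\Phi(u)=u^{p}$, observe that the ratio $\Phi^{-1}(t)/\Phi^{-1}(2t)=2^{-1/p}$ is constant so $\bar{\alpha}_{\Phi}=\bar{\beta}_{\Phi}=2^{-1/p}$, and substitute to obtain the two terms of the maximum. Your extra check that $C_{\Phi_1},C_{\Phi_2}$ reduce to $C_{p_1},C_{p_2}$ under $wL^{\Phi}(\mathbb{R}^{n})=wL^{p}(\mathbb{R}^{n})$ is a detail the paper leaves implicit.
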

	\begin{proof}
		We commence by demonstrating the lower bound within inequality. Define the function $\Phi(u)$ as $\Phi(u)=u^p$. In this set up,
		$$\bar{\alpha}_{\Phi}=\inf_{u>0}\frac{\Phi^{-1}(u)}{\Phi^{-1}(2u)}=\inf_{u>0}\frac{u^{\frac{1}{p}}}{(2u)^{\frac{1}{p}}}=2^{-\frac{1}{p}}$$
		and 
		$$\bar{\beta}_{\Phi}=\sup_{u>0}\frac{\Phi^{-1}(u)}{\Phi^{-1}(2u)}=\sup_{u>0}\frac{u^{\frac{1}{p}}}{(2u)^{\frac{1}{p}}}=2^{-\frac{1}{p}}.$$
		In the case where $\Phi=u^p$, it follows that $wL^\Phi(\mathbb{R}^n)=wL^p(\mathbb{R}^n)$. According to Theorem \ref{t2}, we then obtain
		$$\max\left\{\frac{2^{\frac{2}{p}}}{C_{p_1}C_{p_2}},\frac{2^{1-\frac{2}{p}}}{C_{p_1}C_{p_2}}\right\}\leq L_{YJ}^C(wL^{\Phi}(\mathbb{R}^{n})).$$

\end{proof}
	
\section{The constant $L_{YJ}^{C,p}(\lambda,\mu,X)$ }	
	Regarding the above - mentioned constant \( L_{YJ}^{C}(\lambda,\mu,X) \), we generalize the constant \( L_{YJ}^{C}(\lambda,\mu,X) \), and this generalization encompasses some previous conclusions. Next, we will formally present the definition of the constant \( L_{YJ}^{C,p}(\lambda,\mu,X) \) to compute the results in weak Orlicz spaces and weak Lebesgue spaces.
	\begin{definition}
			For a quasi-Banach space $X$, and for $\lambda,\mu>0$, the skew von Neumann-Jordan constant $L_{YJ}^{C,p}(\lambda,\mu,X)$ is defined by 
		$$L_{YJ}^{C,p}(\lambda,\mu,X)=\sup\left\{\frac{\|\lambda f+\mu g\|^p+\|\mu f-\lambda g\|^p}{C_{X_1}C_{X_2}(\lambda^p+\mu^p)(\|f\|^p+\|g\|^p)}:f,g\in X, (f,g)\neq(0,0)\right\}$$
			where
		$$C_{X_1}=\sup\left\{\frac{\|\lambda f+\mu g\|_X}{\lambda\|f\|_X+\mu\|g\|_X}:f,g\in X, (f,g)\neq(0,0)\right\}$$
		and
		$$C_{X_2}=\sup\left\{\frac{\|\mu f-\lambda g\|_X}{\mu\|f\|_X+\lambda\|g\|_X}:f,g\in X, (f,g)\neq(0,0)\right\}.$$
	\end{definition}
	
	\begin{Theorem}\label{t3}
		Let $\Phi$ be any N-function, then$$L_{YJ}^{C,p}(wL^{\Phi}(\mathbb{R}^{n}))\geq\max\left\{\frac{(\lambda+\mu)^p+|\mu-\lambda|^p}{2C_{\Phi_1}C_{\Phi_2}\bar{\alpha}_{\Phi}^{p}(\lambda^p+\mu^p)},\frac{2^{p-1}\bar{\beta}_\Phi^p}{C_{\Phi_1}C_{\Phi_2}}\right\} .$$
	\end{Theorem}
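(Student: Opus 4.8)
The plan is to recycle the two extremal families of test functions constructed in the proof of Theorem \ref{t2} and simply re-evaluate the $p$-power quotient defining $L_{YJ}^{C,p}$ on them, rather than the squared quotient used there. The only arithmetic change is that each relevant norm is now raised to the power $p$ and the normalizing factor $\lambda^2+\mu^2$ is replaced by $\lambda^p+\mu^p$. Because both families were chosen so that each test function has weak-Orlicz norm equal to $1$, the denominator factor $\|f_1\|_{wL^\Phi}^p+\|f_2\|_{wL^\Phi}^p$ is always $2$, and it is precisely this $2$ that yields the factor $2$ in the first term of the maximum and the factor $2^{p-1}$ in the second.

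For the first term I would take the same $\varepsilon>0$, the same $t_0$ satisfying $\Phi^{-1}(t_0)/\Phi^{-1}(2t_0)<\bar\alpha_\Phi+\varepsilon$, the same radius $r_0=(2t_0|B(0,1)|)^{-1/n}$, the same pair of disjoint balls, and the same functions $f_i=\Phi^{-1}(2t_0)\chi_{B(x_i,r_0)}$. Lemma \ref{t1} again gives $\|f_i\|_{wL^\Phi}=1$, and the disjointness computation carried out in Theorem \ref{t2} still yields $\|\lambda f_1+\mu f_2\|_{wL^\Phi}>(\lambda+\mu)/(\bar\alpha_\Phi+\varepsilon)$ and $\|\mu f_1-\lambda f_2\|_{wL^\Phi}>|\mu-\lambda|/(\bar\alpha_\Phi+\varepsilon)$. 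Raising these to the $p$-th power and substituting into the definition of $L_{YJ}^{C,p}$, the denominator becomes $2\,C_{\Phi_1}C_{\Phi_2}(\lambda^p+\mu^p)$ while the numerator exceeds $\bigl((\lambda+\mu)^p+|\mu-\lambda|^p\bigr)/(\bar\alpha_\Phi+\varepsilon)^p$; letting $\varepsilon\to 0$ then delivers the first entry of the maximum, with the combination $(\lambda+\mu)^p+|\mu-\lambda|^p$ left unsimplified exactly as it appears in the statement.

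For the second term I would reuse the $\bar\beta_\Phi$-family: choose $u_0$ with $\Phi^{-1}(u_0)/\Phi^{-1}(2u_0)>\bar\beta_\Phi-\varepsilon/2$, and the same $g_1,g_2$ built from $\chi_{B(y_1,v_0)}\pm\chi_{B(y_2,v_0)}$ scaled by $\Phi^{-1}(u_0)$, for which $\|g_1\|_{wL^\Phi}=\|g_2\|_{wL^\Phi}=1$, together with $\|\lambda g_1+\mu g_2\|_{wL^\Phi}>\lambda(2\bar\beta_\Phi-\varepsilon)$ and $\|\mu g_1-\lambda g_2\|_{wL^\Phi}>\mu(2\bar\beta_\Phi-\varepsilon)$. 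The key cancellation here is that after raising to the $p$-th power the numerator factors as $(\lambda^p+\mu^p)(2\bar\beta_\Phi-\varepsilon)^p$, which cancels the $(\lambda^p+\mu^p)$ in the denominator and leaves $(2\bar\beta_\Phi-\varepsilon)^p/(2\,C_{\Phi_1}C_{\Phi_2})$; sending $\varepsilon\to 0$ and writing $2^p/2=2^{p-1}$ gives $2^{p-1}\bar\beta_\Phi^p/(C_{\Phi_1}C_{\Phi_2})$. I do not expect a genuine obstacle, since the geometric input (disjoint balls, Lemma \ref{t1}, and the behaviour of $\Phi^{-1}$ at the dilation factor $2$) is identical to that of Theorem \ref{t2}; the only care required is the bookkeeping of the $p$-th powers, namely verifying that the $(\lambda^p+\mu^p)$ factor cancels in the $\bar\beta_\Phi$ estimate but is retained in the $\bar\alpha_\Phi$ estimate, where the numerator does not collapse to a multiple of $\lambda^p+\mu^p$.
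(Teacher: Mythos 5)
Your proposal is correct and follows essentially the same route as the paper: the paper's proof likewise recycles the extremal functions $f_1,f_2$ and $g_1,g_2$ from Theorem \ref{t2} together with their norm estimates, raises them to the $p$-th power in the quotient defining $L_{YJ}^{C,p}$, and performs exactly the bookkeeping you describe, retaining $(\lambda^p+\mu^p)$ in the $\bar{\alpha}_{\Phi}$ bound while cancelling it in the $\bar{\beta}_{\Phi}$ bound to obtain $(2\bar{\beta}_{\Phi}-\varepsilon)^p/(2C_{\Phi_1}C_{\Phi_2})$ and hence $2^{p-1}\bar{\beta}_{\Phi}^p/(C_{\Phi_1}C_{\Phi_2})$ as $\varepsilon\to 0$.
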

	\begin{proof}
		According to Theorem \ref{t2}, we can obtain
		$\|f_1\|_{wL^\Phi}=\|f_2\|_{wL^\Phi}=1$, $$\|\lambda f_{1}+\mu f_{2}\|_{wL^{\Phi}}>\frac{\lambda+\mu}{\bar{\alpha}_{\Phi}+\varepsilon}$$ and $$\|\mu f_1-\lambda f_2\|_{wL^{\Phi}}>\frac{|\mu-\lambda|}{\bar{\alpha}_{\Phi}+\varepsilon}.$$
		
		Thus, we have
		$$\begin{aligned}
	L_{YJ}^{C,p}(wL^\Phi(\mathbb{R}^n))&\geq\frac{\|\lambda f_1+\mu f_2\|_{wL^\Phi}^p+\|\mu f_1-\lambda f_2\|_{wL^\Phi}^p}{C_{\Phi_1}C_{\Phi_2}(\lambda^p+\mu^p)(\|f_1\|_{wL^\Phi}^p+\|f_2\|_{wL^\Phi}^p)}\\&\geq\frac{(\lambda+\mu)^p+|\mu-\lambda|^p}{2C_{\Phi_1}C_{\Phi_2}(\lambda^p+\mu^p)(\bar{\alpha}_\Phi+\epsilon)^p}.	\end{aligned}$$
		
		We can conclude that
		$$L_{YJ}^{C,p}(wL^{\Phi}(\mathbb{R}^{n}))\geq\frac{(\lambda+\mu)^p+|\mu-\lambda|^p}{2\bar{\alpha}_{\Phi}^{p}(\lambda^p+\mu^p)}.$$
		
		Similarly, based on Theorem \ref{t2}, we can also obtain $\|g_{1}\|_{wL^{\phi}}=\|g_{2}\|_{wL^{\phi}}=1$,
		$$\|\lambda g_{1}+\mu g_{2}\|_{wL^{\Phi}}>\lambda(2\bar{\beta}-\varepsilon)$$ and 
		$$\|\mu g_{1}-\lambda g_{2}\|_{wL^{\Phi}}>\mu(2\bar{\beta}-\varepsilon).$$
		
		Thus, we have
		$$\begin{aligned}
	L_{YJ}^{C,p}(wL^{\Phi}(\mathbb{R}^{n}))&\geq\frac{\|\lambda g_{1}+\mu g_{2}\|_{wL^{\Phi}}^{p}+\|\mu g_{1}-\lambda g_{2}\|_{wL^{\Phi}}^{p}}{C_{\Phi_1}C_{\Phi_2}(\lambda^p+\mu^p)(\|g_{1}\|_{wL^{\Phi}}^{p}+|\|g_{2}\|_{wL^{\Phi}}^{p})}\\&\geq\frac{(2\bar{\beta}_{\Phi}-\varepsilon)^{p}}{2C_{\Phi_1}C_{\Phi_2}}.\end{aligned}$$
		
        We can conclude that
		$$L_{YJ}^{C,p}(wL^\Phi(\mathbb{R}^n))\geq\frac{2^{p-1}\bar{\beta}_\Phi^p}{C_{\Phi_1}C_{\Phi_2}}.$$	
	\end{proof}
	Next, following the proof of Theorem \ref{t3}, we shall establish a lower bound for the $p$-th skew von Neumann-Jordan constant in weak Lebesgue spaces.
	\begin{Theorem}
			If $p>1$, then
		$$L_{YJ}^{C,p}(wL^{\Phi}(\mathbb{R}^{n}))\geq\max\left\{\frac{(\lambda+\mu)^p+|\mu-\lambda|^p}{C_{p_1}C_{p_2}(\lambda^p+\mu^p)},\frac{2^{p-2}}{C_{p_1}C_{p_2}}\right\} .$$
	\end{Theorem}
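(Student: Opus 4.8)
The plan is to obtain this theorem as a direct specialization of Theorem \ref{t3} to the power N-function, exactly as Theorem \ref{t4} was obtained from Theorem \ref{t2}. So the first step is to set $\Phi(u)=u^{p}$. Under this choice the identification $wL^{\Phi}(\mathbb{R}^{n})=wL^{p}(\mathbb{R}^{n})$ holds, and since the auxiliary constants $C_{\Phi_1},C_{\Phi_2}$ depend only on the underlying quasi-norm of the space, they coincide with $C_{p_1},C_{p_2}$ as defined in Lemma \ref{t7}. This bookkeeping identification is the one point to state explicitly, but it is immediate once the two spaces are recognized as equal.

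Next I would record the elementary computation already carried out in the proof of Theorem \ref{t4}: for $\Phi(u)=u^{p}$ one has $\Phi^{-1}(t)=t^{1/p}$, whence
$$\bar{\alpha}_{\Phi}=\inf_{u>0}\frac{u^{1/p}}{(2u)^{1/p}}=2^{-1/p}, \qquad \bar{\beta}_{\Phi}=\sup_{u>0}\frac{u^{1/p}}{(2u)^{1/p}}=2^{-1/p}.$$
In particular $\bar{\alpha}_{\Phi}^{p}=\bar{\beta}_{\Phi}^{p}=2^{-1}$, which is the key numerical fact driving the cancellations below.

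The third step is to substitute these values into the two quantities appearing inside the maximum in Theorem \ref{t3}. For the first term, the denominator factor $2\,\bar{\alpha}_{\Phi}^{p}=2\cdot 2^{-1}=1$ disappears, so
$$\frac{(\lambda+\mu)^{p}+|\mu-\lambda|^{p}}{2C_{\Phi_1}C_{\Phi_2}\bar{\alpha}_{\Phi}^{p}(\lambda^{p}+\mu^{p})} =\frac{(\lambda+\mu)^{p}+|\mu-\lambda|^{p}}{C_{p_1}C_{p_2}(\lambda^{p}+\mu^{p})}.$$
For the second term, $2^{p-1}\bar{\beta}_{\Phi}^{p}=2^{p-1}\cdot 2^{-1}=2^{p-2}$, so
$$\frac{2^{p-1}\bar{\beta}_{\Phi}^{p}}{C_{\Phi_1}C_{\Phi_2}}=\frac{2^{p-2}}{C_{p_1}C_{p_2}}.$$
Taking the maximum of these two expressions reproduces exactly the claimed lower bound, completing the proof.

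There is no genuine obstacle here; the argument is a substitution plus arithmetic with powers of $2$. The only place demanding a moment of care is confirming $\bar{\alpha}_{\Phi}^{p}=\bar{\beta}_{\Phi}^{p}=2^{-1}$ and tracking how the prefactors $2$ and $2^{p-1}$ in Theorem \ref{t3} interact with it; getting these exponents right is what turns the general N-function bound into the clean Lebesgue-space form above.
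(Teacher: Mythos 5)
Your proposal is correct and matches the paper's own proof essentially verbatim: both specialize Theorem \ref{t3} to $\Phi(u)=u^p$, use $\bar{\alpha}_{\Phi}=\bar{\beta}_{\Phi}=2^{-1/p}$ (so $\bar{\alpha}_{\Phi}^p=\bar{\beta}_{\Phi}^p=2^{-1}$), identify $wL^{\Phi}(\mathbb{R}^n)=wL^p(\mathbb{R}^n)$ with $C_{\Phi_1},C_{\Phi_2}$ becoming $C_{p_1},C_{p_2}$, and substitute to get the two terms of the maximum. Your write-up is in fact slightly more careful than the paper's, since you make the power-of-two cancellations $2\cdot 2^{-1}=1$ and $2^{p-1}\cdot 2^{-1}=2^{p-2}$ explicit.
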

	\begin{proof}
			We commence by demonstrating the lower bound within inequality. Define the function $\Phi(u)$ as $\Phi(u)=u^p$. Based on Theorem \ref{t4}, we can obtain
		$\bar{\alpha}_{\Phi}=2^{-\frac{1}{p}}$ and $\bar{\beta}_{\Phi}=2^{-\frac{1}{p}}.$
		In the case where $\Phi=u^p$, it follows that $wL^\Phi(\mathbb{R}^n)=wL^p(\mathbb{R}^n)$. According to Theorem \ref{t3}, we then obtain
		$$L_{YJ}^{C,p}(wL^{\Phi}(\mathbb{R}^{n}))\geq\max\left\{\frac{(\lambda+\mu)^p+|\mu-\lambda|^p}{C_{p_1}C_{p_2}(\lambda^p+\mu^p)},\frac{2^{p-2}}{C_{p_1}C_{p_2}}\right\} .$$	
	\end{proof}
	
	\section*{Acknowledgments}
	Thanks to all the members of the Functional Analysis Research team of the College of Mathematics and Physics of Anqing Normal University for their discussion and correction of the diﬃculties and errors encountered in this paper.

\end{document}